\newtheorem{theo}{Theorem}[section]
\newtheorem{lemm}[theo]{Lemma}
\newtheorem{defi}[theo]{Definition}
\numberwithin{equation}{section}
\newcommand{\bal}{\begin{align}}
\newcommand{\bbal}{\begin{align*}}
\newcommand{\beq}{\begin{equation}}
\newcommand{\eeq}{\end{equation}}
\newcommand{\bca}{\begin{cases}}
\newcommand{\eca}{\end{cases}}
\newcommand{\pa}{\partial}
\newcommand{\fr}{\frac}
\newcommand{\De}{\Delta}
\newcommand{\ep}{\varepsilon}
\newcommand{\dd}{\mathrm{d}}
\newcommand{\R}{\mathbb{R}}
\newcommand{\Z}{\mathbb{Z}}
\begin{document}

\subjclass[2010]{35Q35}
\keywords{Novikov equation, Non-uniform dependence}

\title[Novikov equation]{Non-uniform dependence for the Novikov equation in Besov spaces}

\author[J. Li]{Jinlu Li}
\address{School of Mathematics and Computer Sciences, Gannan Normal University, Ganzhou 341000, China}
\email{lijinlu@gnnu.edu.cn}

\author[M. Li]{Min Li}
\address{School of information Technology, Jiangxi University of Finance and Economics, Nanchang, 330032, China}
\email{limin@jxufe.edu.cn}

\author[W. Zhu]{Weipeng Zhu}
\address{School of Mathematics and Information Science, Guangzhou University, Guangzhou 510006, China}
\email{mathzwp2010@163.com}

\begin{abstract}
In this paper, we investigate the dependence on initial data of solutions to the Novikov equation. We show that the solution map is not uniformly continuous dependence on the initial data in Besov spaces $B^s_{p,r}(\R),\ s>\max\{1+\frac 1p,\frac32\}$.
\end{abstract}

\maketitle

\section{Introduction and main result}

In this paper, we consider the Cauchy problem for the Novikov equation
\begin{equation}\label{nov}
\left\{
\begin{array}{ll}
(1-\partial_x^2)u_t= 3uu_xu_{xx}+u^2u_{xxx}-4u^2u_x,  &t>0,\\[1ex]
u(x,0)=u_0(x).
\end{array}
\right.
\end{equation}
What we are most concerned about is the issue of non-uniform dependence on the initial data. This equation  was discovered very recently by Novikov in a symmetry classification of nonlocal PDEs with
cubic nonlinearity. He showed that Eq.(\ref{nov}) is  integrable  by  using  a definition of the existence of an infinite hierarchy of quasi-local higher symmetries \cite{VN}. It has a bi-Hamiltonian structure and admits  exact peakon solutions $u(t,x)=\pm\sqrt c e^{|x-ct|}$ with $c>0$ \cite{HW}. The Novikov equation had been studied by many authors. Indeed, it is locally well-posed in certain Sobolev spaces and Besov spaces \cite{WY2,WY3,YLZ1,YLZ2}. Moreover, it has global strong solutions \cite{WY2}, finite-time blow up solutions \cite{YLZ2} and global weak solutions \cite{Lai,WY1}.

 The Novikov equation can be thought as a generalization of the well-known Camassa-Holm (CH) equation
  $$ (1-\partial_x^2)u_t=3uu_x-2u_xu_{xx}-uu_{xxx}.$$
  This equation is known as the shallow water wave equation \cite{CH,CL}. It is completely integrable, which has been studied extensively by many authors \cite{CH,Co3,CMcK}.  The CH equation also has a Hamiltonian structure \cite{Co1,FF}, and admits exact peaked solitons of the form $ce^{|x-ct|}$ with
  $c>0$  which are orbitally stable \cite{CS}. These peaked solutions also  mimic the pattern  specific to the waves of greatest height \cite{Co4,CE6,To}.

The local well-posedness for the Cauchy problem of CH equation in Sobolev spaces and Besov spaces was established in \cite{CE2,CE3,Dan2,RB}. Moreover, the CH equation has  global strong solutions \cite{Co2,CE2,CE3}, finite-time blow-up  strong solutions \cite{Co2,CE1,CE2,CE3}, unique global weak solution \cite{XZ}, and it is continuous dependence on initial data \cite{LJY}.

The CH equation was the only known integrable equation
having peakon solutions until 2002 when another such equation was discovered by Degasperis
and Procesi \cite{DP}
  $$ (1-\partial_x^2)u_t=4uu_x-3u_xu_{xx}-uu_{xxx}.$$
The  DP  equation  can  be  regarded  as  a  model  for  nonlinear  shallow  water  dynamics  and  its  asymptotic
accuracy is the same as for the CH shallow water equation\cite{DGH}, also, it's integrable with a bi-Hamiltonian structure \cite{DHH,CIL}.

 Similar to  the CH equation, the  DP  equation has  travelling wave solutions \cite{Lenells,VP}. The Cauchy problem of the DP equation is  locally  well-posed in certain Sobolev spaces and Besov spaces \cite{GL,HH,Y2}. In addition, it has  global strong solutions \cite{LY1,Y2,Y4}, the finite-time blow-up solutions \cite{ELY1,ELY2}, global weak solutions\cite{CK,ELY1,Y3,Y4}, and it is the continuous dependence on initial data \cite{LJY}. Different form the CH equation,  the DP equation has not only peakon solutions \cite{DHH}, periodic peakdon solutions \cite{Y3}, but also  shock peakons\cite{Lun} and the periodic  shock waves \cite{ELY2}.

 The issue of non-uniform dependence has attracted a lot of attention after Kenig et al's study on some dispersive equations \cite{KPV}. For the non-uniform continuity of CH and DP equations in Sobolev spaces, we
refer to \cite{HM,HH1}. And for the Novikov equation, Himonas and Holliman have proved that the data-to-solution map is not uniformly continuous in Sobolev spaces $H^s, s>\frac32,$ they used the method of approximate solutions in conjunction with well-posedness estimates \cite{HH}. Up to now, to our best knowledge, there is no paper concerning the non-uniform dependence on initial data for the Novikov equation under the framework of Besov spaces, which is we shall investigate in this paper.

The Novikov equation (\ref{nov}) can be changed into a transport-like form
\begin{equation}\label{novikov}
\begin{cases}
u_t+u^2u_x=-(1-\pa^2_x)^{-1}\Big(\frac12u_x^3+\pa_x\big(\frac32uu^2_x+u^3\big)\Big),\\
u(0,x)=u_0,
\end{cases}
\end{equation}
For simplicity, we denote
\bbal
R(u)=R_1(u)+R_2(u)+R_3(u),
\end{align*}
where
\bbal
&R_1(u)=-\frac12(1-\pa^2_x)^{-1}\Big(u_x^3\Big), \qquad R_2(u)=-\pa_x(1-\pa^2_x)^{-1}\Big(u^3\Big),
\\&R_3(u)=-\frac32\pa_x(1-\pa^2_x)^{-1}\Big(uu^2_x\Big).
\end{align*}

Then, we have the following result.

\begin{theo}\label{th2}
Let $1\leq p,r\leq \infty$ and $s>\max\{1+\frac 1p,\frac32\}$. The data-to-solution map for the Novikov equation \eqref{novikov} is not uniformly continuous from any bounded subset in $B^s_{p,r}$ into $\mathcal{C}([0,T];B^s_{p,r}(\R))$. That is, there exists two sequences of solutions $u^n$ and $v^n$ such that
\bbal
&||u^n_0||_{B^s_{p,r}(\R)}+||v^n_0||_{B^s_{p,r}(\R)}\lesssim 1, \quad \lim_{n\rightarrow \infty}||u^n_0-v^n_0||_{B^s_{p,r}(\R)}= 0,
\\&\liminf_{n\rightarrow \infty}||u^n(t)-v^n(t)||_{B^s_{p,r}(\R)}\gtrsim t,  \quad t\in[0,T_0],
\end{align*}
with small positive time $T_0$.
\end{theo}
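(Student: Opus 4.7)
The plan is to follow the classical \emph{high-low frequency approximate-solution} strategy, together with a first-order Taylor expansion of the Novikov flow. Fix a nonzero Schwartz function $\phi$ on $\mathbb{R}$ and parameters $\alpha,\beta\in\mathbb{R}$ with $\alpha^2\neq\beta^2$. Take
\begin{equation*}
u^n_0(x) := \alpha\,2^{-n/2}\phi(x)+2^{-ns}\phi(x)\cos(2^n x), \qquad v^n_0(x) := \beta\,2^{-n/2}\phi(x)+2^{-ns}\phi(x)\cos(2^n x).
\end{equation*}
A standard Littlewood--Paley inspection shows that the low-frequency piece has $B^s_{p,r}$-norm $\lesssim 2^{-n/2}$ while the oscillatory packet is spectrally concentrated near $2^n$ with $B^s_{p,r}$-norm $\simeq 1$. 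Hence $\|u^n_0\|_{B^s_{p,r}}+\|v^n_0\|_{B^s_{p,r}}\lesssim 1$ and $\|u^n_0-v^n_0\|_{B^s_{p,r}}\lesssim 2^{-n/2}\to 0$.

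By the local well-posedness of \eqref{novikov} in $B^s_{p,r}$ (already established under the standing hypotheses on $s,p,r$), both families give rise to solutions $u^n, v^n \in \mathcal{C}([0,T_0];B^s_{p,r})$ with uniform bounds on a common time interval. Integrating \eqref{novikov} once in time gives
\begin{equation*}
u^n(t) = u^n_0 + tW_n + \mathcal{E}_n(t), \qquad W_n := -(u^n_0)^2\partial_x u^n_0 + R(u^n_0),
\end{equation*}
where $\mathcal{E}_n(t) := \int_0^t [\partial_\tau u^n(\tau) - W_n]\,d\tau$, and similarly $v^n(t) = v^n_0 + t\widetilde{W}_n + \widetilde{\mathcal{E}}_n(t)$. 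The theorem then reduces to two estimates: the lower bound $\|W_n-\widetilde{W}_n\|_{B^s_{p,r}} \gtrsim 1$ uniformly in $n$, and the remainder upper bound $\|\mathcal{E}_n(t)-\widetilde{\mathcal{E}}_n(t)\|_{B^s_{p,r}} = o(t)$ uniformly in $n$ as $t\to 0$. The triangle inequality then delivers
\begin{equation*}
\|u^n(t)-v^n(t)\|_{B^s_{p,r}} \geq t\|W_n-\widetilde{W}_n\|_{B^s_{p,r}} - \|u^n_0-v^n_0\|_{B^s_{p,r}} - o(t) \gtrsim t
\end{equation*}
for $n$ large and $t$ in a possibly shrunk $[0,T_0]$.

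For the lower bound on $W_n-\widetilde{W}_n$, expanding $(u^n_0)^2\partial_x u^n_0$ and using $\partial_x[\phi\cos(2^n x)] = -2^n\phi\sin(2^n x) + \mathrm{l.o.t.}$ isolates a frequency-$2^n$ piece $\alpha^2\cdot 2^{-ns}\phi^3(x)\sin(2^n x)$ with $B^s_{p,r}$-norm $\simeq \alpha^2\|\phi^3\|_{L^p}$; all other pieces---including the $R$-contributions, which gain at least two derivatives from the regularizing factor $(1-\partial_x^2)^{-1}$---vanish in $B^s_{p,r}$ as $n\to\infty$, so after subtracting the $v$-analogue and spectrally localizing near $2^n$ one obtains a lower bound $\gtrsim |\alpha^2-\beta^2|$. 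For the remainder, the plan is to treat $u^{\mathrm{app}}_n(t) := u^n_0 + tW_n$ as an approximate solution of \eqref{novikov}: a short computation shows that the residual $F^{\mathrm{app}}_n := \partial_t u^{\mathrm{app}}_n + (u^{\mathrm{app}}_n)^2\partial_x u^{\mathrm{app}}_n - R(u^{\mathrm{app}}_n)$ vanishes to order $O(t)$ pointwise, and a careful paraproduct analysis---exploiting the specific bilinear structure of $u^{\mathrm{app}}_n$, which prevents the naive one-derivative loss from $\partial_x u^{\mathrm{app}}_n$---yields $\|F^{\mathrm{app}}_n\|_{L^\infty_T B^s_{p,r}} \lesssim t$ uniformly in $n$. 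A transport-type $B^s_{p,r}$ Gronwall estimate then gives $\|u^n - u^{\mathrm{app}}_n\|_{L^\infty_T B^s_{p,r}} \lesssim t^2$, and likewise for $v^n$, hence $\|\mathcal{E}_n(t) - \widetilde{\mathcal{E}}_n(t)\|_{B^s_{p,r}} \lesssim t^2 = o(t)$ as required. The principal technical obstacle is this last step: the cubic nonlinearity $u^2\partial_x u$ generically loses one derivative in $B^s_{p,r}$, so one must exploit the high-low structure of $u^{\mathrm{app}}_n$ together with Bony's decomposition to close the $B^s_{p,r}$ estimate without loss---this is the chief point of departure from the $H^s$-based argument of Himonas--Holliman.
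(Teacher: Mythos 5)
Your construction and reduction are essentially the paper's: a low-frequency bump of amplitude $2^{-n/2}$ superposed with a unit-size wave packet at frequency $\sim 2^n$ (the paper takes the asymmetric special case $\alpha=0$, $\beta=1$, i.e. $u^n_0=f_n$ and $v^n_0=f_n+g_n$ with $g_n=2^{-n/2}\phi$), the same first-order Taylor expansion of the flow, and the same leading mechanism: the interaction $(\mathrm{low})^2\,\partial_x(\mathrm{high})$, whose $B^s_{p,r}$-norm stays of order one while all other first-order contributions (including the smoothing terms $R$) vanish as $n\to\infty$. Two smaller points before the main one: with oscillation frequency exactly $2^n$ the packet is not contained in a single dyadic block (the paper uses $\tfrac{17}{12}2^n$ precisely so that $\Delta_jf_n=0$ for $j\neq n$), so the identity $\|\cdot\|_{B^s_{p,r}}=2^{ns}\|\cdot\|_{L^p}$ implicit in your lower bound needs adjusting; and the claimed size $\simeq\alpha^2\|\phi^3\|_{L^p}$ is only an asymptotic lower bound, obtained in the paper through a Riemann--Lebesgue argument.

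The genuine gap is the remainder estimate, which is exactly the content of the paper's Lemma 3.3 and which you assert rather than prove. Setting $w=u^n-u^{\mathrm{app}}_n$, the difference of the transport terms produces a term of the form $w\,(u^n+u^{\mathrm{app}}_n)\,\partial_x u^{\mathrm{app}}_n$, and at the top regularity the product law forces the contribution $\|w\|_{L^\infty}\|(u^n+u^{\mathrm{app}}_n)\partial_x u^{\mathrm{app}}_n\|_{B^s_{p,r}}\lesssim \|w\|_{B^{s-1}_{p,r}}\,2^{n/2}$, since $\|u^n_0\|_{B^{s+1}_{p,r}}\sim 2^n$ is only partially compensated by the $L^\infty$-smallness $2^{-n/2}$. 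A single Gronwall estimate in $B^s_{p,r}$ therefore does not close, with or without Bony's decomposition: the coefficient growing like $2^{n/2}$ can only be absorbed if one \emph{separately} proves that $w$ is small in a lower norm. This is the paper's key device, a two-exponent bootstrap: first an estimate $\|w\|_{B^{s-1}_{p,r}}\lesssim t^2 2^{-n/2}+2^{-n(1+\varepsilon_s)}$ (the extra factor $2^{-n/2}$ coming from the smallness of the data and of the source terms in the lower norm), which is then fed into the $B^s_{p,r}$ estimate so that $2^{n/2}\|w\|_{B^{s-1}_{p,r}}\lesssim t^2+o_n(1)$. Your plan names the obstacle but not this mechanism, so the step ``a transport-type $B^s_{p,r}$ Gronwall estimate then gives $\|u^n-u^{\mathrm{app}}_n\|\lesssim t^2$'' is unsupported as written. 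Relatedly, demanding a remainder $O(t^2)$ \emph{uniformly in} $n$ is stronger than needed (and than what the bootstrap naturally yields): a bound of the form $Ct^2+C2^{-n\varepsilon_s}$ suffices, because the theorem only requires a liminf in $n$ at fixed $t$.
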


Our paper is organized as follows. In Section 2, we give some preliminaries which will be used in the sequel. In Section 3, we give the proof of our main theorem.\\

\noindent\textbf{Notations.} Given a Banach space $X$, we denote its norm by $\|\cdot\|_{X}$. The symbol $A\lesssim B$ means that there is a uniform positive constant $c$ independent of $A$ and $B$ such that $A\leq cB$.

\section{Littlewood-Paley analysis}

In this section, we will recall some facts about the Littlewood-Paley decomposition, the nonhomogeneous Besov spaces and their some useful properties. For more details, the readers can refer to \cite{B.C.D}.

There exists a couple of smooth functions $(\chi,\varphi)$ valued in $[0,1]$, such that $\chi$ is supported in the ball $\mathcal{B}\triangleq \{\xi\in\mathbb{R}^d:|\xi|\leq \frac 4 3\}$, $\varphi$ is supported in the ring $\mathcal{C}\triangleq \{\xi\in\mathbb{R}^d:\frac 3 4\leq|\xi|\leq \frac 8 3\}$ and $\varphi\equiv 1$ for $\frac{4}{3}\leq |\xi| \leq \frac{3}{2}$. Moreover,
$$\forall\,\ \xi\in\mathbb{R}^d,\,\ \chi(\xi)+{\sum\limits_{j\geq0}\varphi(2^{-j}\xi)}=1,$$
$$\forall\,\ \xi\in\mathbb{R}^d\setminus\{0\},\,\ {\sum\limits_{j\in \Z}\varphi(2^{-j}\xi)}=1,$$
$$|j-j'|\geq 2\Rightarrow\textrm{Supp}\,\ \varphi(2^{-j}\cdot)\cap \textrm{Supp}\,\ \varphi(2^{-j'}\cdot)=\emptyset,$$
$$j\geq 1\Rightarrow\textrm{Supp}\,\ \chi(\cdot)\cap \textrm{Supp}\,\ \varphi(2^{-j}\cdot)=\emptyset.$$
Then, we can define the nonhomogeneous dyadic blocks $\Delta_j$ and nonhomogeneous low frequency cut-off operator $S_j$ as follows:
$$\Delta_j{u}= 0,\,\ if\,\ j\leq -2,\quad
\Delta_{-1}{u}= \chi(D)u=\mathcal{F}^{-1}(\chi \mathcal{F}u),$$
$$\Delta_j{u}= \varphi(2^{-j}D)u=\mathcal{F}^{-1}(\varphi(2^{-j}\cdot)\mathcal{F}u),\,\ if \,\ j\geq 0,$$
$$S_j{u}= {\sum\limits_{j'=-\infty}^{j-1}}\Delta_{j'}{u}.$$

\begin{defi}[\cite{B.C.D}]\label{de2.3}
Let $s\in\mathbb{R}$ and $1\leq p,r\leq\infty$. The nonhomogeneous Besov space $B^s_{p,r}(\R^d)$ consists of all tempered distribution $u$ such that
\begin{align*}
||u||_{B^s_{p,r}(\R^d)}\triangleq \Big|\Big|(2^{js}||\Delta_j{u}||_{L^p(\R^d)})_{j\in \Z}\Big|\Big|_{\ell^r(\Z)}<\infty.
\end{align*}
\end{defi}

Then, we have the following product laws.
\begin{lemm}[\cite{B.C.D}]\label{le-paraduct}
(1) For any $s>0$ and $1\leq p,r\leq\infty$, there exists a positive constant $C=C(d,s,p,r)$ such that
$$\|uv\|_{B^s_{p,r}(\mathbb{R}^d)}\leq C\Big(\|u\|_{L^{\infty}(\mathbb{R}^d)}\|v\|_{B^s_{p,r}(\mathbb{R}^d)}+\|v\|_{L^{\infty}(\mathbb{R}^d)}\|u\|_{B^s_{p,r}(\mathbb{R}^d)}\Big).$$
(2) Let $1\leq p,r\leq\infty$ and $s>\max\{\frac32,1+\frac{d}{p}\}$. Then, we have
$$||uv||_{B^{s-2}_{p,r}(\mathbb{R}^d)}\leq C||u||_{B^{s-1}_{p,r}(\mathbb{R}^d)}||v||_{B^{s-2}_{p,r}(\mathbb{R}^d)}.$$
\end{lemm}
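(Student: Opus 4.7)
The two estimates are standard consequences of Bony's paraproduct decomposition
$$uv=T_uv+T_vu+R(u,v),\qquad T_uv=\sum_{j}S_{j-1}u\,\Delta_jv,\qquad R(u,v)=\sum_{j}\Delta_ju\,\tilde\Delta_jv,$$
with $\tilde\Delta_j=\Delta_{j-1}+\Delta_j+\Delta_{j+1}$. My plan is to control $\Delta_k$ of each piece in $L^p$, multiply by the appropriate regularity weight ($2^{ks}$ for part (1), $2^{k(s-2)}$ for part (2)), and take $\ell^r$-norms in $k$. The spectral supports force $\Delta_k T_uv$ to pick up only $|j-k|\le N_0$ (annular spectrum), whereas $\Delta_k R(u,v)$ is driven by the tail $j\ge k-N_0$ (ball spectrum).

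For part (1), the paraproducts give $\|\Delta_k T_uv\|_{L^p}\lesssim\|u\|_{L^\infty}\sum_{|j-k|\le N_0}\|\Delta_jv\|_{L^p}$ (and the symmetric bound for $T_vu$), which yields $\|T_uv\|_{B^s_{p,r}}\lesssim\|u\|_{L^\infty}\|v\|_{B^s_{p,r}}$ for any $s\in\R$. For the remainder, $\|\Delta_kR(u,v)\|_{L^p}\lesssim\|u\|_{L^\infty}\sum_{j\ge k-N_0}\|\tilde\Delta_jv\|_{L^p}$; multiplying by $2^{ks}$ and applying Young's convolution inequality on $\Z$ works iff the kernel $2^{-(j-k)s}\mathbf{1}_{\{j\ge k-N_0\}}$ lies in $\ell^1(\Z)$, which holds exactly when $s>0$. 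Summing the three pieces (using also the symmetric bound $\|T_vu\|_{B^s_{p,r}}\lesssim\|v\|_{L^\infty}\|u\|_{B^s_{p,r}}$) yields the first statement.

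For part (2), the hypothesis $s-1>\max\{1/2,d/p\}$ gives the embedding $B^{s-1}_{p,r}\hookrightarrow L^\infty$, which immediately handles $T_uv$: applying part (1) at regularity $s-2$ yields $\|T_uv\|_{B^{s-2}_{p,r}}\lesssim\|u\|_{L^\infty}\|v\|_{B^{s-2}_{p,r}}\lesssim\|u\|_{B^{s-1}_{p,r}}\|v\|_{B^{s-2}_{p,r}}$. For the remainder I would use $\|\Delta_ju\,\tilde\Delta_jv\|_{L^p}\le\|\Delta_ju\|_{L^\infty}\|\tilde\Delta_jv\|_{L^p}$ together with the Bernstein inequality $\|\Delta_ju\|_{L^\infty}\lesssim 2^{jd/p}\|\Delta_ju\|_{L^p}$; the cumulative weight $2^{-j(2s-3-d/p)}$ against the index shift $2^{k(s-2)}$ is summable (and the pointwise product of two $\ell^r$-sequences lies in $\ell^r$ via $\ell^r\hookrightarrow\ell^{2r}$ and H\"older) precisely because $s-1-d/p>0$.

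The delicate term is $T_vu$, since in the regime $s<2$ the factor $v\in B^{s-2}_{p,r}$ has negative regularity index and $S_{j-1}v$ need not even be in $L^\infty$. Here I would Bernstein each low block, $\|\Delta_{j'}v\|_{L^\infty}\lesssim 2^{j'd/p}\|\Delta_{j'}v\|_{L^p}$, and sum over $j'\le j-2$. Because the exponent $d/p-(s-2)=2+d/p-s$ is positive but strictly less than $1$ (thanks to $s>1+d/p$), the geometric tail is absorbable via Young's inequality, giving $\|S_{j-1}v\|_{L^\infty}\lesssim 2^{j(2+d/p-s)}\tilde c_j\|v\|_{B^{s-2}_{p,r}}$ with $(\tilde c_j)\in\ell^r$. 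Paired with $\|\Delta_ju\|_{L^p}\lesssim 2^{-j(s-1)}c_j^{(u)}\|u\|_{B^{s-1}_{p,r}}$ and the constraint $|j-k|\le N_0$, the net weight $2^{k(1-s+d/p)}$ is uniformly bounded on the nonhomogeneous range $k\ge -1$. This $T_vu$ estimate is the main obstacle, and it is precisely where the strict hypothesis $s>1+d/p$ is indispensable: without it the geometric tail in the $j'$-sum fails to be absorbable into the $\ell^r$ data of $v$, and the bilinear bound does not close in $B^{s-2}_{p,r}$.
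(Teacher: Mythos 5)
The paper offers no proof of this lemma --- it is quoted from Bahouri--Chemin--Danchin --- so there is nothing internal to compare against; I can only assess your argument on its own terms. Your part (1) is the standard tame estimate and is correct. In part (2), the bounds for $T_uv$ and $T_vu$ are essentially sound, modulo a small case you gloss over: when $s\ge 2+\frac{d}{p}$ the exponent $2+\frac{d}{p}-s$ that you declare ``positive'' is not, and one should instead note that $v$ then has nonnegative regularity index so that $S_{j-1}v$ is controlled even more directly (the borderline $s-2=\frac{d}{p}$, $r>1$ still closes because the polynomial growth of $\sum_{j'\le j-2}c_{j'}$ is beaten by the surplus decay $2^{-j(s-1-d/p)}$).

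The genuine gap is in the remainder. By applying Bernstein to $\Delta_j u$ all the way up to $L^\infty$ you effectively place $u$ in $B^{s-1-d/p}_{\infty,r}$ and $v$ in $B^{s-2}_{p,r}$, so the total regularity available to $R(u,v)$ is $2s-3-\frac{d}{p}$. Since the blocks $\Delta_ju\,\tilde\Delta_jv$ have ball-shaped spectra, the sum $\sum_{j\ge k-N_0}$ closes via Young's inequality only if this total regularity is positive, and $2s-3-\frac{d}{p}>0$ is \emph{not} implied by $s>\max\{\frac32,1+\frac{d}{p}\}$: take $d=1$, $p=2$, $s=1.6$, $r=\infty$, where $2s-3-\frac{d}{p}=-0.3$ and your majorant $\sum_{j\ge k-N_0}2^{(k-j)(s-2)}2^{-j(s-1-d/p)}c_jd_j$ is infinite for bounded $(c_j),(d_j)$. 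The condition $s-1-\frac{d}{p}>0$ you invoke only makes the prefactor $2^{-j(s-1-d/p)}$ bounded; it does not make the geometric kernel $\ell^1$. The standard repair is to take the product of the two blocks in $L^{p/2}$ when $p\ge 2$, i.e. $\|\Delta_ju\,\tilde\Delta_jv\|_{L^{p/2}}\le\|\Delta_ju\|_{L^p}\|\tilde\Delta_jv\|_{L^p}$, so that $R(u,v)\in B^{2s-3}_{p/2,r}$ with total regularity $2s-3>0$ guaranteed by $s>\frac32$, and only \emph{then} apply the embeddings $B^{2s-3}_{p/2,r}\hookrightarrow B^{2s-3-d/p}_{p,r}\hookrightarrow B^{s-2}_{p,r}$ (the last using $s\ge 1+\frac{d}{p}$); for $p<2$ one Bernsteins $u$ only down to $L^{p'}$ and passes through an $L^1$-based space. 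A telltale sign of the gap is that your proof never genuinely uses the hypothesis $s>\frac32$, which is the binding constraint precisely when $\frac{d}{p}<\frac12$ and is exactly what the remainder term needs.
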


\begin{lemm}[Theorem 3.38, \cite{B.C.D} and Lemma 2.9, \cite{Li-Yin2}]\label{le-trsport}
Let $1\leq p,r\leq \infty$. Assume that
\begin{align}
\sigma> -d \min(\frac{1}{p}, \frac{1}{p'}) \quad \mathrm{or}\quad \sigma> -1-d \min(\frac{1}{p}, \frac{1}{p'})\quad \mathrm{if} \quad \mathrm{div\,} v=0.
\end{align}
There exists a constant $C=C(d,p,r,\sigma)$ such that for any smooth solution to the following linear transport equation:
\begin{equation*}
\quad \partial_t f+v\pa_xf=g,\quad \; f|_{t=0} =f_0.
\end{equation*}
We have
\begin{align}\label{ES2}
\sup_{s\in [0,t]}\|f(s)\|_{B^{\sigma}_{p,r}(\mathbb{R}^d)}\leq Ce^{CV_{p}(v,t)}\Big(\|f_0\|_{B^\sigma_{p,r}(\mathbb{R}^d)}
+\int^t_0\|g(\tau)\|_{B^{s}_{p,r}(\mathbb{R}^d)}\dd \tau\Big),
\end{align}
with
\begin{align*}
V_{p}(v,t)=
\begin{cases}
\int_0^t \|\nabla v(s)\|_{B^{\frac{d}{p}}_{p,\infty}(\mathbb{R}^d)\cap L^\infty(\mathbb{R}^d)}\dd s,&\ \ \mathrm{if} \; \sigma<1+\frac{d}{p},\\
\int_0^t \|\nabla v(s)\|_{B^{\sigma}_{p,r}(\mathbb{R}^d)}\dd s,&\ \ \mathrm{if} \; \sigma=1+\frac{d}{p} \mbox{ and } r>1,\\
\int_0^t \|\nabla v(s)\|_{B^{\sigma-1}_{p,r}(\mathbb{R}^d)}\dd s, &\ \ \mathrm{if} \;\sigma>1+\frac{d}{p}\ \mathrm{or}\ \{\sigma=1+\frac{d}{p} \mbox{ and } r=1\}.
\end{cases}
\end{align*}
If $f=v$, then for all $\sigma>0$ ($\sigma>-1$, if $\mathrm{div\,} v=0$), the estimate \eqref{ES2} holds with
\[V_{p}(t)=\int_0^t \|\nabla v(s)\|_{L^\infty(\mathbb{R}^d)}\dd s.\]
\end{lemm}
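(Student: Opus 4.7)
The plan is to apply the \emph{method of approximate solutions}, adapting the Himonas--Kenig/Himonas--Holliman strategy from $H^s$ to the Besov scale $B^s_{p,r}$. I would construct two sequences $u^n,v^n$ of exact solutions to \eqref{novikov} whose initial data are bounded in $B^s_{p,r}$ with vanishing distance, yet whose values at positive times are separated by $\gtrsim t$. The argument splits into: (i) design of explicit approximate solutions $U^n,V^n$; (ii) residual estimates; (iii) comparison of the true and approximate solutions via Lemma~\ref{le-trsport}; and (iv) a triangle inequality. For step (i), I take the ``low-frequency base $+$ high-frequency packet'' ansatz
\[
U^n(t,x)=a_n\,\phi_0(x)+n^{-s}\,\phi(x)\cos\!\bigl(nx-a_n^{2}\,n\,t\bigr),\qquad
V^n(t,x)=b_n\,\phi_0(x)+n^{-s}\,\phi(x)\cos\!\bigl(nx-b_n^{2}\,n\,t\bigr),
\]
where $\phi_0,\phi$ are Schwartz bumps normalized by $\phi\,\phi_0=\phi$. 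Because Novikov's transport coefficient is $u^2$ (not $u$ as in CH), the antipodal choice $a_n=-b_n$ produces identical phase speeds and fails; I would instead take $a_n=1+n^{-1}$, $b_n=1-n^{-1}$, so $|a_n-b_n|\to 0$ (hence $\|U^n(0)-V^n(0)\|_{B^s_{p,r}}\to 0$) while $a_n^2-b_n^2=4n^{-1}$. A difference-of-cosines calculation gives
\[
U^n(t,x)-V^n(t,x)=(a_n-b_n)\phi_0(x)-2 n^{-s}\phi(x)\sin\!\bigl(nx-\tfrac{a_n^2+b_n^2}{2}nt\bigr)\sin\!\bigl(\tfrac{a_n^2-b_n^2}{2}nt\bigr),
\]
whose high-frequency part has $B^s_{p,r}$ norm $\asymp |\sin(2t)|\asymp t$ on $[0,T_0]$, providing the desired lower bound.

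For step (ii), plugging $U^n$ into \eqref{novikov} produces a residual $F^n$; using Lemma~\ref{le-paraduct} together with the two-derivative smoothing of $(1-\partial_x^2)^{-1}$ present in $R_1,R_2,R_3$, I expect $\|F^n\|_{L^\infty_T B^s_{p,r}}\lesssim n^{-\varepsilon}$ for some $\varepsilon>0$. Letting $u^n$ be the true solution with data $U^n(0,\cdot)$ (whose existence on a uniform interval $[0,T_0]$ follows from the local well-posedness cited in the introduction), the difference $w^n:=u^n-U^n$ solves a transport-type equation $\partial_t w^n+(u^n)^2\partial_x w^n=F^n+N^n(w^n)$ with $w^n(0)=0$, where $N^n(w^n)$ is a bilinear/trilinear remainder in $w^n$ with coefficients built from $u^n$ and $U^n$ controlled by Lemma~\ref{le-paraduct}. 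Invoking Lemma~\ref{le-trsport} with $\sigma=s$ and Gronwall then yields $\|w^n\|_{L^\infty_T B^s_{p,r}}\to 0$; the analogous bound for $v^n-V^n$ and the triangle inequality
\[
\|u^n(t)-v^n(t)\|_{B^s_{p,r}}\ge\|U^n(t)-V^n(t)\|_{B^s_{p,r}}-\|w^n(t)\|_{B^s_{p,r}}-\|v^n(t)-V^n(t)\|_{B^s_{p,r}}
\]
close the proof.

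The main obstacle lies in the residual analysis of step (ii): the cubic nonlinearity couples the frequency-$n$ wave packet with itself at frequencies $0$ and $3n$, and with the base $\phi_0$, producing several types of terms that must each be tracked in $B^s_{p,r}$. The parameters (amplitude $n^{-s}$, frequency $n$, base gap $2n^{-1}$) have to be simultaneously calibrated so that all four conditions---boundedness of the data, vanishing data distance, phase-separation of $U^n$ from $V^n$, and a decaying residual $F^n$---hold at once. The threshold $s>\max\{1+\tfrac{1}{p},\tfrac{3}{2}\}$ is exactly what is needed to apply both parts of Lemma~\ref{le-paraduct} (the $3/2$ entering through part~(2) when estimating products at a loss of two derivatives coming from the $u^2$ transport coefficient) and the $\sigma=s$ branch of Lemma~\ref{le-trsport}; without this threshold one loses control of the commutator term in the transport estimate and the Gronwall argument no longer closes.
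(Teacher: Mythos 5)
Your proposal does not address the statement you were asked to prove. The statement is Lemma~\ref{le-trsport}, the \emph{a priori} estimate for the linear transport equation $\partial_t f+v\partial_x f=g$ in $B^\sigma_{p,r}(\mathbb{R}^d)$, with the exponential factor $e^{CV_p(v,t)}$ and the three regimes for $V_p$ depending on whether $\sigma$ is below, at, or above $1+\frac{d}{p}$. What you have written instead is a strategy for the paper's main theorem (non-uniform dependence of the data-to-solution map for the Novikov equation), via approximate solutions in the Himonas--Holliman style. None of the ingredients required for the transport lemma appear in your text: there is no localization by the dyadic blocks $\Delta_j$, no commutator estimate for $[\Delta_j, v\partial_x]$, no summation over $j$ in $\ell^r$, and no Gronwall argument producing the factor $e^{CV_p(v,t)}$. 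In particular, the genuinely delicate points of that lemma --- why the required regularity of $\nabla v$ degrades from $B^{\sigma-1}_{p,r}$ to $B^{d/p}_{p,\infty}\cap L^\infty$ as $\sigma$ drops to and below $1+\frac{d}{p}$, why the low-regularity threshold $\sigma>-d\min(\frac1p,\frac1{p'})$ improves by one derivative when $\mathrm{div}\, v=0$, and why the case $f=v$ allows $V_p(t)=\int_0^t\|\nabla v\|_{L^\infty}\,\dd s$ for all $\sigma>0$ --- are not touched. (The paper itself does not reprove this lemma; it imports it from Theorem~3.38 of \cite{B.C.D} and Lemma~2.9 of \cite{Li-Yin2}, where the proof proceeds by exactly the commutator-plus-Gronwall scheme just described.)

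As a secondary remark: even read as a proposal for Theorem~\ref{th2}, your route diverges from the paper's. The paper does not use traveling-phase approximate solutions $n^{-s}\phi(x)\cos(nx-a_n^2nt)$; it takes exact solutions with data $u_0^n=f_n$ and $v_0^n=f_n+g_n$, where $f_n=2^{-ns}\phi(x)\sin(\frac{17}{12}2^nx)$ is a single-dyadic-block packet and $g_n=2^{-n/2}\phi(x)$ is a low-frequency perturbation, and extracts the lower bound $\gtrsim t$ from the single interaction term $t(g_n)^2\partial_x f_n$ in the first-order Taylor expansion $v^n\approx v^n_0-t(v^n_0)^2\partial_x v^n_0$, which lives in one dyadic block and whose norm is computed exactly via the Riemann--Lebesgue lemma. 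But that comparison is moot here: the task was to prove the transport estimate, and the proposal contains no proof of it.
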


\section{Non-uniform continuous dependence}

In this section, we will give the proof of our main theorem. Firstly, we recall the local well-posedness result.
\begin{lemm}[\cite{WY3}]\label{le-wu}
For $1\leq p,r\leq \infty$ and $s>\max\{1+\frac 1p,\frac32\}$ and initial data $u_0\in B^s_{p,r}(\R)$, there exists a time $T=T(s,p,r,||u_0||_{B^s_{p,r}(\R)})>0$ such that the equation \eqref{novikov} have a unique solution $u\in \mathcal{C}([0,T];B^s_{p,r}(\R))$. Moreover, for all $t\in[0,T]$, there holds
\[||u(t)||_{B^s_{p,r}(\R)}\leq C||u_0||_{B^s_{p,r}(\R)}.\]
\end{lemm}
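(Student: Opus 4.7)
The plan is to prove the local well-posedness statement by a standard Friedrichs-type iteration scheme, exploiting the transport form \eqref{novikov}. Define $u^0\equiv 0$ and construct $u^{n+1}$ inductively as the solution of the linear transport equation
\bbal
\pa_t u^{n+1}+(u^n)^2\pa_x u^{n+1}=R(u^n),\qquad u^{n+1}(0,x)=u_0(x).
\end{align*}
Since $(u^n)^2$ and $R(u^n)$ are already known, each step exists globally by the linear transport theory. The strategy is then the classical four-step template: uniform bounds in $B^s_{p,r}$, contraction in a lower-regularity space, passage to the limit, and continuity in time.

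For the uniform bound, I would apply Lemma \ref{le-trsport} at regularity $\sigma=s>1+\tfrac{1}{p}$, in which case the drift norm is $V_p(t)=\int_0^t\|\pa_x(u^n)^2\|_{B^{s-1}_{p,r}}\,\dd\tau$. Using part (1) of Lemma \ref{le-paraduct} (an algebra property, since $s-1>\max\{\tfrac1p,\tfrac12\}\geq 0$), one gets $\|\pa_x(u^n)^2\|_{B^{s-1}_{p,r}}\lesssim \|u^n\|_{B^s_{p,r}}^2$. The source is controlled by the smoothing property of $(1-\pa_x^2)^{-1}$ and $\pa_x(1-\pa_x^2)^{-1}$, which map $B^{s-2}_{p,r}\to B^s_{p,r}$ and $B^{s-1}_{p,r}\to B^s_{p,r}$ respectively, together with the algebra estimate for the cubic terms $u_x^3$, $u^3$, and $uu_x^2$, giving $\|R(u^n)\|_{B^s_{p,r}}\lesssim \|u^n\|_{B^s_{p,r}}^3$. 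An induction argument then yields, for $T\lesssim \|u_0\|_{B^s_{p,r}}^{-2}$, the uniform estimate $\|u^n(t)\|_{B^s_{p,r}}\leq C\|u_0\|_{B^s_{p,r}}$ on $[0,T]$.

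Next, I would show that $\{u^n\}$ is Cauchy in $\mathcal{C}([0,T];B^{s-1}_{p,r})$. The increment $w^n:=u^{n+1}-u^n$ satisfies a transport equation driven by $(u^n)^2$ with forcing $\big((u^{n-1})^2-(u^n)^2\big)\pa_x u^n+R(u^n)-R(u^{n-1})$. Applying Lemma \ref{le-trsport} at regularity $s-1$ and using part (2) of Lemma \ref{le-paraduct} together with the uniform bound from the previous step gives
\bbal
\|w^n(t)\|_{B^{s-1}_{p,r}}\lesssim e^{CTM^2}\int_0^t\|w^{n-1}(\tau)\|_{B^{s-1}_{p,r}}\,\dd\tau,\qquad M:=\|u_0\|_{B^s_{p,r}},
\end{align*}
which iterates to geometric decay in $n$. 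Hence $u^n\to u$ in $\mathcal{C}([0,T];B^{s-1}_{p,r})$, and the uniform $B^s_{p,r}$ bound plus Fatou's property together with interpolation yield $u\in L^\infty([0,T];B^s_{p,r})$, with the quantitative bound $\|u(t)\|_{B^s_{p,r}}\leq C\|u_0\|_{B^s_{p,r}}$. Passing to the limit in each nonlinear term of the equation for $u^{n+1}$ (using $B^{s-1}_{p,r}$-convergence and the Besov algebra property) shows that $u$ solves \eqref{novikov}. Continuity in time at the top regularity follows from the transport estimate applied directly to $u$ together with a Bona--Smith regularization argument when $r<\infty$, and from a separate weak continuity plus norm-continuity analysis at the endpoint $r=\infty$.

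Uniqueness is obtained by the same $B^{s-1}_{p,r}$ stability estimate applied to the difference of two solutions with the same data, followed by Gr\"onwall. The main obstacle in this program is not the uniform bound but the Cauchy estimate at regularity $s-1$: to control the product $\big((u^{n-1})^2-(u^n)^2\big)\pa_x u^n$ in $B^{s-1}_{p,r}$, one needs part (2) of Lemma \ref{le-paraduct}, whose hypothesis $s>\max\{\tfrac32,1+\tfrac1p\}$ matches precisely the regularity threshold of the theorem; a similar care is required for the $uu_x^2$ term inside $R_3$, which is the least regular of the nonlinearities and explains the $\tfrac32$ lower bound.
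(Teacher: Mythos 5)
The paper does not prove this lemma at all: it is quoted verbatim from Wu--Yin \cite{WY3} as a known local well-posedness result, so there is no in-paper argument to compare against. Your outline is the standard Danchin-type scheme for Camassa--Holm-type equations (linear transport iteration, uniform $B^s_{p,r}$ bounds on a time $T\lesssim \|u_0\|_{B^s_{p,r}}^{-2}$, contraction in $B^{s-1}_{p,r}$, Bona--Smith regularization for continuity in time), which is essentially the proof given in the cited reference, and your identification of the two delicate points --- the need for the product estimate $\|uv\|_{B^{s-2}_{p,r}}\lesssim\|u\|_{B^{s-1}_{p,r}}\|v\|_{B^{s-2}_{p,r}}$ in the stability step, and the $uu_x^2$ term as the source of the $\tfrac32$ threshold --- is accurate. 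The only caveat worth recording is the endpoint $r=\infty$, where the limit solution is a priori only weakly continuous in time with values in $B^s_{p,\infty}$; you acknowledge this but do not resolve it, and indeed the clean statement $u\in\mathcal{C}([0,T];B^s_{p,\infty})$ requires either an extra argument or a weakened formulation, a subtlety the present paper inherits from \cite{WY3} without comment.
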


Next, we give two technical lemmas to estimate the error.

Letting $\hat{\phi}$ be a $\mathcal{C}_0(\mathbb{R})$ function such that
\begin{equation*}
\hat{\phi}(x)=
\begin{cases}
1, \quad |x|\leq \frac{1}{4},\\
0, \quad |x|\geq \frac{1}{2}.
\end{cases}
\end{equation*}
We choose the velocity $u^n_0$ having the following form:
\bbal
u^n_0=f_n,
\end{align*}
with
\bbal
f_n(x)=
2^{-ns}\phi(x)\sin(\frac{17}{12}2^nx),
\qquad n \in \Z.
\end{align*}
Notice that
\bbal
\mathrm{supp} \ \hat{f}_n\subset \Big\{\xi\in\R: \ \frac{17}{12}2^n-\fr12\leq |\xi|\leq \frac{17}{12}2^n+\fr12\Big\},
\end{align*}
then we can deduce that $\De_jf_n=0, j\neq n$ and $\De_nf_n=f_n$. Moreover, there holds
\bbal
||f_n||_{B^\sigma_{p,r}}\leq C2^{(\sigma-s)n}.
\end{align*}
Assume that $u^n$ is the solution of \eqref{novikov} with initial data $u^n_0$. Then, we have the following estimate between $u^n_0$ and $u^n$.
\begin{lemm}\label{le-u}
Let
$$\ep_s=\min\Big\{\frac12(s-\max\{1+\frac 1p,\frac32\}),\frac12\Big\}.$$
Then there holds
$$||u^n-u^n_0||_{L^\infty_T(B^s_{p,r})}\leq C2^{-n\ep_s}.$$
\end{lemm}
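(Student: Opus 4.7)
The plan is to view $w := u^n - u^n_0$ as the solution of a linear inhomogeneous transport equation with zero initial data. Subtracting $\pa_t u^n_0 = 0$ from \eqref{novikov} gives
\[
\pa_t w + (u^n)^2 \pa_x w = F, \qquad w|_{t=0} = 0, \qquad F := R(u^n) - (u^n)^2 \pa_x u^n_0.
\]
Lemma \ref{le-wu} supplies the uniform bound $\|u^n\|_{L^\infty_T B^s_{p,r}} \lesssim 1$, hence $\|\pa_x (u^n)^2\|_{B^{s-1}_{p,r}} \lesssim 1$, so the integrating factor $V(t)$ appearing in Lemma \ref{le-trsport} stays bounded uniformly at both regularities $\sigma = s-1$ and $\sigma = s$ that I shall need.

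I split $F = G_0 + G_1$ with $G_0 := R(u^n_0) - (u^n_0)^2 \pa_x u^n_0$ depending only on $f_n$ and $G_1 := F - G_0$. Because $\widehat{f_n}$ lives in a thin annulus of radius $\tfrac{17}{12} 2^n$, each cubic product of $f_n$ and $\pa_x f_n$ occurring in $G_0$ is Fourier-supported in the union of annuli of radii $\tfrac{17}{12}2^n$ and $3 \cdot \tfrac{17}{12}2^n$, so only the Littlewood--Paley blocks $\De_j$ with $j \in \{n, n+1, n+2\}$ contribute. Combined with $\|f_n\|_{L^\infty} \lesssim 2^{-ns}$, $\|\pa_x f_n\|_{L^p} \lesssim 2^{n(1-s)}$, and the two-derivative gain of $(1-\pa^2_x)^{-1}$ (respectively one-derivative gain of $\pa_x(1-\pa^2_x)^{-1}$) at frequency $2^n$, an explicit Littlewood--Paley calculation yields
\[
\|G_0\|_{B^{s-1}_{p,r}} \lesssim 2^{-2ns}, \qquad \|G_0\|_{B^s_{p,r}} \lesssim 2^{n(1-2s)}.
\]
Using the factorisations $u^3 - v^3 = (u-v)(u^2+uv+v^2)$ and $u^2-v^2 = (u-v)(u+v)$, Lemma \ref{le-paraduct}, the algebra property of $B^{s-1}_{p,r}$ (valid since $s-1 > 1/p$), and the uniform $B^s_{p,r}$ bounds on $u^n, u^n_0$, I get
\[
\|G_1\|_{B^{s-1}_{p,r}} \lesssim \|w\|_{B^{s-1}_{p,r}}, \qquad \|G_1\|_{B^s_{p,r}} \lesssim 2^n \|w\|_{L^\infty} + \|w\|_{B^s_{p,r}},
\]
the $2^n$ factor in the $B^s$ bound coming from $\|\pa_x u^n_0\|_{B^s_{p,r}} \lesssim 2^n$.

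Now I bootstrap. Lemma \ref{le-trsport} at $\sigma = s-1$ combined with the $G_0, G_1$ estimates gives $\|w(t)\|_{B^{s-1}_{p,r}} \lesssim \int_0^t (2^{-2ns} + \|w(\tau)\|_{B^{s-1}_{p,r}})\, d\tau$, and Gr\"onwall yields $\|w\|_{L^\infty_T B^{s-1}_{p,r}} \lesssim 2^{-2ns}$. The embedding $B^{s-1}_{p,r} \hookrightarrow L^\infty$ controls $\|w\|_{L^\infty}$ by the same small quantity, so feeding this into the $B^s$ estimate of $G_1$ gives $\|F\|_{B^s_{p,r}} \lesssim 2^{n(1-2s)} + \|w\|_{B^s_{p,r}}$; a second application of Lemma \ref{le-trsport} at $\sigma = s$ together with Gr\"onwall yields
\[
\|w\|_{L^\infty_T B^s_{p,r}} \lesssim 2^{n(1-2s)} = 2^{-n(2s-1)},
\]
which is comfortably smaller than $2^{-n\ep_s}$ since $\ep_s \leq \tfrac{1}{2}$ while $2s - 1 > 2$ for $s > \tfrac{3}{2}$. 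The delicate step is the Fourier-support bookkeeping producing the $G_0$ bounds: one must verify that each cubic expression inside $R(f_n)$ and $(f_n)^2 \pa_x f_n$ remains localised in the narrow band $j \in \{n, n+1, n+2\}$, so that the pointwise $L^p$-size $2^{n(1-3s)}$ translates correctly into the small $B^\sigma$ bounds claimed above.
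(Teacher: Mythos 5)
Your argument is correct, but it takes a genuinely different route from the paper's proof of Lemma \ref{le-u}. The paper never writes an equation for the difference: it simply bounds $\|u^n-u^n_0\|_{B^s_{p,r}}\le\int_0^t\|\pa_\tau u^n\|_{B^s_{p,r}}\,\dd\tau$ and estimates $\pa_t u^n=-(u^n)^2\pa_x u^n+R(u^n)$ term by term with Lemma \ref{le-paraduct}, the key inputs being the propagated multi-regularity bounds $\|u^n\|_{L^\infty_T(B^{s+k}_{p,r})}\lesssim 2^{kn}$ (in particular $\|u^n\|_{B^{s+1}_{p,r}}\lesssim 2^{n}$ and, via \eqref{es-embedding}, $\|u^n\|_{L^\infty}\lesssim\|u^n\|_{B^{s-1-\ep_s}_{p,r}}\lesssim 2^{-(1+\ep_s)n}$), which gives exactly the stated rate $2^{-n\ep_s}$. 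You instead transplant the technique the paper reserves for Lemma \ref{le-v}: a transport equation for $w=u^n-u^n_0$, a splitting of the source into the pure-data part $G_0$ (estimated through the exact Fourier localization of the cubic expressions in $f_n$ in the blocks $j\in\{n,n+1,n+2\}$ together with the $2^{-2n}$, resp. $2^{-n}$, gain of $(1-\pa_x^2)^{-1}$, resp. $\pa_x(1-\pa_x^2)^{-1}$, on those blocks) and a remainder $G_1$ linear in $w$, followed by a Gr\"onwall bootstrap first in $B^{s-1}_{p,r}$ and then in $B^{s}_{p,r}$ via Lemma \ref{le-trsport}. I checked your intermediate claims ($\|G_0\|_{B^{s-1}_{p,r}}\lesssim 2^{-2ns}$, $\|G_0\|_{B^{s}_{p,r}}\lesssim 2^{n(1-2s)}$, $\|G_1\|_{B^{s-1}_{p,r}}\lesssim\|w\|_{B^{s-1}_{p,r}}$, $\|G_1\|_{B^{s}_{p,r}}\lesssim 2^{n}\|w\|_{L^\infty}+\|w\|_{B^{s}_{p,r}}$, boundedness of $V_p$ at both levels) and they hold under $s>\max\{1+\frac1p,\frac32\}$; you should only state explicitly that $(1-\pa_x^2)^{-1}$ and $\pa_x(1-\pa_x^2)^{-1}$ do not move the Fourier support (so the derivative gains are legitimate), and that when $s-1<1+\frac1p$ the transport estimate at $\sigma=s-1$ uses $V_p$ controlled through $B^{s-1}_{p,r}\hookrightarrow B^{1/p}_{p,\infty}\cap L^\infty$. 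What your route buys is a sharper rate, $2^{-n(2s-1)}$ instead of $2^{-n\ep_s}$, and it needs only the uniform $B^s_{p,r}$ bound of Lemma \ref{le-wu} rather than the family of bounds $\|u^n\|_{B^{s+k}_{p,r}}\lesssim 2^{kn}$ that the paper asserts as easy; what it costs is heavier machinery (transport estimates at two regularity levels plus the support bookkeeping), which the paper avoids here because the weaker rate suffices for Theorem \ref{th2}.
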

\begin{proof}
By the well-posedness result (see Lemma \ref{le-wu}), the solution $u^n$ belong to $\mathcal{C}([0,T];B^s_{p,r})$ and have common lifespan $T\simeq 1$. In fact, it is easy to show that for $k\geq -1$,
\bbal
||u^n_0||_{B^{s+k}_{p,r}}\leq C2^{kn},
\end{align*}
and
\bbal
||u^n||_{L^\infty_T(B^{s+k}_{p,r})}\leq C2^{kn}.
\end{align*}
Since $s-\ep_s-1>\frac 1p$, then we have
\bal\label{es-embedding}
||f||_{L^\infty}\leq ||f||_{B^{s-1-\ep_s}_{p,r}}.
\end{align}
Hence, we obtain for all $t\in[0,T]$
\bbal
||u^n-u^n_0||_{B^s_{p,r}}&\leq \int^t_0||\pa_\tau u^n||_{B^s_{p,r}} \dd\tau
\\&\leq \int^t_0||-(1-\pa^2_x)^{-1}\Big(\frac12(u^n_x)^3+\pa_x\big(\frac32u^n(u^n)^2_x+(u^n)^3\big)\Big)||_{B^s_{p,r}} \dd\tau
\\&\quad + \int^t_0||-(u^n)^2 u^n_x||_{B^s_{p,r}} \dd\tau
\\&\leq C\big(||u^n||^2_{B^{s}_{p,r}}||u^n||_{B^{s-1}_{p,r}}+||u^n||^3_{B^{s-1}_{p,r}}+||u^n||_{L^\infty}||u^n||_{B^{s}_{p,r}}||u^n_x||_{B^{s}_{p,r}}\big)
\\&\leq C2^{-n}+C2^{-3n}+C2^{(-1-\ep_s)n}2^n
\\&\leq C2^{-n\ep_s}.
\end{align*}
This completes the proof of this lemma.
\end{proof}

Next, we choose the velocity $v^n_0$ having the following form:
\bbal
v^n_0=f_n+g_n,
\end{align*}
with
\bbal
g_n(x)=
2^{-\frac12n}\phi(x),
\qquad n \in \Z.
\end{align*}
Assume that $v^n$ is the solution of \eqref{novikov} with initial data $v^n_0$. Then, we have the following estimate between $v^n_0$ and $v^n$.
\begin{lemm}\label{le-v}
Under the assumptions of Theorem \ref{th2}, there holds for all $t\in[0,T]$,
$$||v^n-v^n_0+t(v^n_0)^2\pa_xv^n_0||_{B^s_{p,r}}\leq Ct^2+C2^{-n\ep_s}.$$
Here, $C$ is a positive constant independent with $t$.
\end{lemm}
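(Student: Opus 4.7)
My plan is to read the claim as a quantitative first-order Taylor expansion of $v^n$ at $t=0$. Starting from the integral formulation of \eqref{novikov}, namely $v^n(t)-v^n_0=\int_0^t\bigl[-(v^n)^2 v^n_x+R(v^n)\bigr](\tau)\,\dd\tau$, and adding and subtracting $t(v^n_0)^2\pa_xv^n_0$ as well as $tR(v^n_0)$, I obtain the decomposition
\[
v^n(t)-v^n_0+t(v^n_0)^2\pa_xv^n_0 \;=\; tR(v^n_0) \;+\; \mathcal{A}(t) \;+\; \mathcal{B}(t),
\]
where $\mathcal{A}(t)=\int_0^t\bigl[(v^n_0)^2\pa_xv^n_0-(v^n)^2 v^n_x\bigr](\tau)\,\dd\tau$ and $\mathcal{B}(t)=\int_0^t\bigl[R(v^n)(\tau)-R(v^n_0)\bigr]\,\dd\tau$. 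The target bound then reduces to showing $\|tR(v^n_0)\|_{B^s_{p,r}}\lesssim 2^{-n\ep_s}$ and $\|\mathcal{A}\|_{B^s_{p,r}}+\|\mathcal{B}\|_{B^s_{p,r}}\lesssim t^2$.

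For the first piece, the crucial size estimates are $\|v^n_0\|_{L^\infty}\lesssim 2^{-n/2}$ and $\|\pa_xv^n_0\|_{L^\infty}\lesssim 2^{-n/2}$, both of which follow from the explicit forms of $f_n$ and $g_n$ (for the gradient bound I use $s>3/2$), together with $\|v^n_0\|_{B^{s+k}_{p,r}}\lesssim 2^{kn}$ for $k\geq -1$. The operators $\pa_x^{\alpha}(1-\pa_x^2)^{-1}$ appearing in $R_1,R_2,R_3$ gain two or one derivatives, respectively, so each $\|R_j(v^n_0)\|_{B^s_{p,r}}$ reduces to a $B^{s-2}_{p,r}$ or $B^{s-1}_{p,r}$ norm of a cubic expression; applying Lemma \ref{le-paraduct} extracts at least two $L^\infty$-factors per cubic, yielding $\|R(v^n_0)\|_{B^s_{p,r}}\lesssim 2^{-n}$, and hence $\|tR(v^n_0)\|_{B^s_{p,r}}\lesssim 2^{-n\ep_s}$ after absorbing $t\leq T$.

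For $\mathcal{A}$, the preliminary fact I need is the linear-in-time bound $\|v^n(\tau)-v^n_0\|_{B^s_{p,r}}\lesssim\tau$. This is obtained by integrating the equation and noting that, although $\|v^n_x\|_{B^s_{p,r}}\lesssim 2^n$, it is multiplied by $\|(v^n)^2\|_{L^\infty}\lesssim 2^{-n}$, so each contribution to $\|\pa_\tau v^n\|_{B^s_{p,r}}$ is of order one. Writing
\[
(v^n)^2 v^n_x-(v^n_0)^2\pa_xv^n_0=(v^n+v^n_0)(v^n-v^n_0)v^n_x+(v^n_0)^2(v^n_x-\pa_xv^n_0),
\]
each piece can be bounded by $C\tau$ via Lemma \ref{le-paraduct}, using the smallness of $\|v^n\pm v^n_0\|_{L^\infty}$ and $\|(v^n_0)^2\|_{L^\infty}$, and the auxiliary estimate $\|v^n_x-\pa_xv^n_0\|_{L^\infty}\lesssim\tau$ obtained from $\|\pa_\tau v^n\|_{B^s_{p,r}}\lesssim 1$ via the embedding $B^{s-1}_{p,r}\hookrightarrow L^\infty$. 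Integrating in $\tau\in[0,t]$ yields $\|\mathcal{A}\|_{B^s_{p,r}}\lesssim t^2$. The term $\mathcal{B}$ is handled analogously: writing $R_j(v^n)-R_j(v^n_0)$ via multilinearity as an expression in $v^n-v^n_0$ and $v^n_x-\pa_xv^n_0$, the same derivative gain of $(1-\pa_x^2)^{-1}$ and the $\tau$-smallness of these differences give $\|\mathcal{B}\|_{B^s_{p,r}}\lesssim t^2$.

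The main obstacle is the mixed high/low-frequency structure of $v^n_0=f_n+g_n$. Because $g_n$ is a low-frequency profile of size $2^{-n/2}$, the norm $\|v^n\|_{B^s_{p,r}}$ does not decay with $n$, in contrast to Lemma \ref{le-u} where $u^n_0=f_n$ alone yielded $\|u^n-u^n_0\|_{B^s_{p,r}}=o(1)$. Consequently, one cannot hope that $\|v^n-v^n_0\|_{B^s_{p,r}}$ is small in absolute terms; one has only $O(\tau)$ uniformly in $n$. The estimates must therefore balance the $2^{-n/2}$-smallness of $\|v^n_0\|_{L^\infty}$ and $\|\pa_xv^n_0\|_{L^\infty}$ (from the amplitude of $g_n$) against order-one or $2^n$-size Besov norms at higher levels, and track the correct power of $\tau$ through every trilinear and nonlocal term.
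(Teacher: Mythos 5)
Your reduction to $\|tR(v^n_0)\|_{B^s_{p,r}}\lesssim 2^{-n\ep_s}$ and $\|\mathcal{A}\|+\|\mathcal{B}\|\lesssim t^2$ is a reasonable Taylor-expansion strategy, and the pieces $tR(v^n_0)$ and $\mathcal{B}$ do go through with the ingredients you list. But there is a genuine gap in $\mathcal{A}$, precisely in the term you call the first piece, $(v^n+v^n_0)(v^n-v^n_0)v^n_x$. Any application of Lemma \ref{le-paraduct} at regularity $s$ must place the full $B^s_{p,r}$ norm on one factor; when it lands on $v^n_x$ this costs $\|v^n_x\|_{B^s_{p,r}}\lesssim 2^{n}$, and the compensating factor you have is only $\|v^n+v^n_0\|_{L^\infty}\|v^n-v^n_0\|_{L^\infty}\lesssim 2^{-n/2}\cdot\tau$, since your stated bound $\|v^n-v^n_0\|_{B^s_{p,r}}\lesssim\tau$ only yields $\|v^n-v^n_0\|_{L^\infty}\lesssim\tau$. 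This gives $\tau\,2^{n/2}$, not $C\tau$. Even the sharper bound obtainable from $\|\pa_\tau v^n\|_{L^\infty}\lesssim\|\pa_x v^n\|_{L^\infty}^3+\cdots\lesssim 2^{-3n\ep_s}$ only produces $\tau\,2^{n(\frac12-3\ep_s)}$, which diverges as $n\to\infty$ whenever $s$ is close to $\max\{1+\frac1p,\frac32\}$ (so that $\ep_s<\frac16$); note that the $L^\infty$ bound $\|\pa_x v^n(t)\|_{L^\infty}\lesssim 2^{-n/2}$ valid at $t=0$ is not available for $t>0$ from the estimates at hand, because $B^{s-\frac32}_{p,r}\hookrightarrow L^\infty$ fails in the range $1+\frac1p<s\le\frac32+\frac1p$. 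The same loss reappears if you regroup the convection difference as $\frac13\pa_x\big((v^n)^3-(v^n_0)^3\big)$, so it is not an artifact of your particular splitting: what is missing is the quantitative smallness $\|v^n-v^n_0\|_{L^\infty}\lesssim\tau 2^{-n/2}$ (which could be derived, e.g., by estimating $\|\pa_\tau v^n\|_{B^{s-\frac12}_{p,r}}\lesssim 2^{-n/2}$ and embedding), but this is neither stated nor proved in your proposal, and your listed ingredients do not imply it. A smaller instance of the same bookkeeping issue occurs in your second piece, where $\|v^n_x-\pa_xv^n_0\|_{B^s_{p,r}}$ requires the $B^{s+1}_{p,r}$ difference bound $\lesssim\tau 2^{n}$, again not among your stated estimates.

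This is exactly the difficulty the paper's proof is built to avoid: instead of treating the convection term perturbatively inside a Duhamel integral, it sets $w_n=v^n-v^n_0-tV^n_0$ with $V^n_0=-(v^n_0)^2\pa_xv^n_0$, keeps $(v^n)^2\pa_xw_n$ on the left as a transport term (so it is never estimated at all), applies the transport estimate of Lemma \ref{le-trsport}, and runs a two-level argument: first a $B^{s-1}_{p,r}$ estimate giving the extra gain $\|w_n\|_{B^{s-1}_{p,r}}\lesssim t^22^{-n/2}+2^{-n(1+\ep_s)}$, which is then used at level $B^s_{p,r}$ to absorb the loss $2^{n/2}\|w_n\|_{B^{s-1}_{p,r}}$ coming from $\|v^n_0\|_{B^{s+1}_{p,r}}\sim 2^{n}$ in the term $w_n(v^n+v^n_0)\pa_xv^n_0$. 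To salvage your route you would either have to import that transport structure, or supply and prove the additional propagation estimate $\|v^n(\tau)-v^n_0\|_{L^\infty}\lesssim\tau 2^{-n/2}$ (together with the $B^{s+1}_{p,r}$ difference bound); as written, the argument does not close.
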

\begin{proof}
By the well-posedness result (see Lemma \ref{le-wu}), the solution $v^n$ belong to $\mathcal{C}([0,T];B^s_{p,r})$ and have common lifespan $T\simeq 1$. For simplicity, we denote $w_n=v^n-v^n_0-tV^n_0$ with $V^n_0=-(v^n_0)^2\pa_x v^n_0$. It is easy to check that for $\sigma\geq -\frac12$
\bal\label{es-(v_0,V_0)}
||v^n_0||_{B^{s+\sigma}_{p,r}}\leq C2^{\sigma n},
\end{align}
then we have
\bal\label{es-(V_0)}
||V^n_0||_{B^{s+\sigma}_{p,r}}&\leq C||v^n_0||_{L^\infty}^2||\pa_x v^n_0||_{B^{s+\sigma}_{p,r}}+C||v^n_0||_{L^\infty}||\pa_x v^n_0||_{L^\infty}||v^n_0||_{B^{s+\sigma}_{p,r}} \nonumber
\\&\leq C2^{-\frac{1}{2}n}2^{-\frac{1}{2}n}2^{(\sigma+1) n}+C2^{-\frac{1}{2}n}2^{-\frac{1}{2}n}2^{\sigma n}\nonumber
\\&\leq C2^{\sigma n}, \quad \mathrm{for} \quad \sigma\geq -\frac12,
\end{align}
and
\bal\label{es-(v)}
||v^n||_{L^\infty_T(B^{s+\sigma}_{p,r})}\leq C2^{\sigma n}, \quad \mathrm{for} \quad \sigma\geq -\frac12.
\end{align}
Note that
$$||v^n_0,V^n_0,v^n||_{B^{s-1}_{p,r}}\leq C||v^n_0,V^n_0,v^n||_{B^{s-\frac{1}{2}}_{p,r}}\leq C 2^{-\frac{1}{2}n},$$
which will be used frequently in the sequel.
By \eqref{novikov}, we can deduce that
\bbal
\pa_tw_n+(v^n)^2\pa_x w_n=&-t(v^n)^2\pa_x V^n_0-t(v^n+v^n_0)V^n_0\pa_xv^n_0
\\&-w_n(v^n+v^n_0)\pa_xv^n_0+R(v^n).
\end{align*}
For the term $R_2(v^n)$, we have from Lemma \ref{le-paraduct} and \eqref{es-(v_0,V_0)}-\eqref{es-(v)} that
\bal\label{es-r2}
||R_2(v^n)||_{B^{s}_{p,r}}&\leq C||(v^n)^3||_{B^{s-1}_{p,r}} \leq C||v^n||_{B^{s-1}_{p,r}}||v^n||^2_{L^\infty}\nonumber
\\&\leq C2^{-\frac32n}\leq C2^{-(1+\ep_s)n}.
\end{align}
For the term $R_3(v^n)$, we obtain from Lemma \ref{le-paraduct} and \eqref{es-embedding}-\eqref{es-(v)},   that
\bal\label{es-r3-(s-1)}
||R_3(v^n)||_{B^{s-1}_{p,r}}&\leq ||v^n(\pa_xv^n)^2||_{B^{s-2}_{p,r}} \nonumber
\\&\leq C||v^n||_{B^{s-1}_{p,r}}||(\pa_xv^n)^2||_{B^{s-2}_{p,r}}\nonumber
\\&\leq C||v^n||_{B^{s-1}_{p,r}}||(\pa_xv^n)^2||_{B^{s-\frac32}_{p,r}} \nonumber
\\&\leq C||v^n||_{B^{s-1}_{p,r}}||\pa_xv^n||_{B^{s-\frac32}_{p,r}}||\pa_xv^n||_{L^\infty} \nonumber
\\&\leq C2^{-(1+\ep_s)n}.
\end{align}
and
\bal\label{es-r3-(s)}
||R_3(v^n)||_{B^{s}_{p,r}}&\leq ||v^n(\pa_xv^n)^2||_{B^{s-1}_{p,r}} \nonumber
\\&\leq C||v^n||_{B^{s}_{p,r}}||v^n||_{L^\infty}||\pa_xv^n||_{L^\infty}+ \nonumber
C||v^n||_{B^{s-1}_{p,r}}||\pa_xv^n||^2_{L^\infty}\nonumber
\\&\leq C2^{-(\frac12+\ep_s)n}\leq C2^{-n\ep_s}.
\end{align}
For the term $R_1(v^n)$, we rewrite it as
\bbal
R_1(v^n)=&\underbrace{-(1-\pa^2_x)^{-1}\Big(\pa_xv^n(\pa_xv^n+\pa_xv^n_0)\pa_x w_n\Big)}_{R_{1,1}}
\\&\underbrace{-(1-\pa^2_x)^{-1}\Big(t(\pa_xv^n)(\pa_xv^n+\pa_xv^n_0)\pa_x V^n_0\Big)}_{R_{1,2}}
\\&\underbrace{-(1-\pa^2_x)^{-1}\Big(\pa_xv^n(\pa_xv^n_0)^2\Big)}_{R_{1,3}},
\end{align*}
then we have from Lemma \ref{le-paraduct} that
\bal\label{es-r11}
||R_{1,1}||_{B^{s}_{p,r}}&\leq \nonumber C||\pa_xv^n(\pa_xv^n+\pa_xv^n_0)\pa_xw_n||_{B^{s-2}_{p,r}}
\\&\leq C||\pa_xw^n||_{B^{s-2}_{p,r}}||\pa_xv^n(\pa_xv^n+\pa_xv^n_0)||_{B^{s-1}_{p,r}} \nonumber
\\&\leq C||w^n||_{B^{s-1}_{p,r}},
\end{align}
\bal\label{es-r11}
||R_{1,2}||_{B^{s}_{p,r}}&\leq Ct||\pa_xv^n(\pa_xv^n+\pa_xv^n_0)\pa_xV^n_0||_{B^{s-2}_{p,r}} \nonumber
\\&\leq Ct||\pa_xV^n_0||_{B^{s-2}_{p,r}}||\pa_xv^n(\pa_xv^n+\pa_xv^n_0)||_{B^{s-1}_{p,r}} \nonumber
\\&\leq Ct2^{-\frac{1}{2}n}.
\end{align}
Using the following inequality
\bbal
||(\pa_xv^n_0)^3||_{B^{s-2}_{p,r}}&\leq C||(\pa_xg_n)^2\pa_xf_n||_{B^{s-2}_{p,r}}+ C||(\pa_xf_n)^2\pa_xg_n||_{B^{s-1}_{p,r}}
\\&\quad +C||(\pa_xf_n)^3||_{B^{s-1}_{p,r}}
+C||(\pa_xg_n)^3||_{B^{s-1}_{p,r}}
\\&\leq C2^{n(s-2)}||\pa_x g_n||^2_{L^\infty}||\pa_x f_n||_{L^p}+C||g_n||_{B^s_{p,r}}||f_n||_{B^s_{p,r}}||\pa_xf_n||_{L^\infty}
\\&\quad +C||g_n||^3_{B^{s}_{p,r}}+
C||f_n||_{B^s_{p,r}}||\pa_xf_n||^2_{L^\infty}
\\&\leq C2^{-2n}+C2^{-\frac32n}+C2^{-2n(s-1)}+C2^{-n(s-1)-\frac12n}
\\&\leq C2^{-n(1+\ep_s)},
\end{align*}
we have
\bal\label{es-r13}
||R_{1,3}||_{B^{s}_{p,r}}&\leq C||\pa_xv^n(\pa_xv^n_0)^2||_{B^{s-2}_{p,r}}\nonumber
\\&\leq  \nonumber C||\pa_xw^n(\pa_xv^n_0)^2||_{B^{s-2}_{p,r}}+Ct||\pa_xV^n_0(\pa_xv^n_0)^2||_{B^{s-2}_{p,r}}
+C||\pa_xv^n_0(\pa_xv^n_0)^2||_{B^{s-2}_{p,r}}\nonumber
\\&\leq Ct2^{-\frac{1}{2}n}+C||w_n||_{B^{s-1}_{p,r}}+2^{-n(1+\ep_s)}.
\end{align}
Combining \eqref{es-r2}-\eqref{es-r13}, we obtain
\bal\label{Es-r-(s-1)}
||R(v^n)||_{B^{s-1}_{p,r}}\leq Ct2^{-\frac{1}{2}n}+C||w_n||_{B^{s-1}_{p,r}}+2^{-n(1+\ep_s)},
\end{align}
and
\bal\label{Es-r-(s)}
||R(v^n)||_{B^{s}_{p,r}}\leq Ct2^{-\frac{1}{2}n}+C||w_n||_{B^{s-1}_{p,r}}+2^{-n\ep_s}.
\end{align}
By Lemma \ref{le-paraduct}, we have
\bal\label{Es-V1-1}
||(v^n)^2\pa_x V^n_0||_{B^{s-1}_{p,r}}&\leq  C||v^n||^2_{B^{s-1}_{p,r}}||V^n_0||_{B^{s}_{p,r}}
\leq C2^{-n},
\end{align}

\bal\label{Es-V1}
||(v^n)^2\pa_x V^n_0||_{B^{s}_{p,r}}&\leq \nonumber C||v^n||^2_{L^\infty}||V^n_0||_{B^{s+1}_{p,r}}+C||v^n||^2_{B^{s}_{p,r}}||\pa_xV^n_0||_{L^\infty}
\\&\leq C,
\end{align}
\bal\label{Es-V2-2}
||(v^n+v^n_0)V^n_0\pa_x v^n_0||_{B^{s-1}_{p,r}}\leq C2^{-n},
\end{align}
\bal\label{Es-V2}
||(v^n+v^n_0)V^n_0\pa_x v^n_0||_{B^{s}_{p,r}}\leq C.
\end{align}
Applying Lemma \ref{le-paraduct} again yields
\bal\label{Es-w-(s-1)}
||w_n(v^n+v^n_0)\pa_xv^n_0||_{B^{s-1}_{p,r}}\leq C||w_n||_{B^{s-1}_{p,r}},
\end{align}
and
\bal\label{Es-w-(s)}
||w_n(v^n+v^n_0)\pa_xv^n_0||_{B^{s}_{p,r}}&\leq C||w_n||_{B^{s-1}_{p,r}}||v^n_0||_{B^{s+1}_{p,r}}||v^n+v^n_0||_{B^{s-1}_{p,r}}+C||w_n||_{B^{s}_{p,r}}\nonumber
\\&\leq C2^{\frac{1}{2}n}||w_n||_{B^{s-1}_{p,r}}+C||w_n||_{B^{s}_{p,r}}.
\end{align}
According to Lemma \ref{le-trsport} and combining \eqref{Es-r-(s-1)}, \eqref{Es-V1-1}, \eqref{Es-V2-2}, \eqref{Es-w-(s-1)}, we obtain
\bbal
||w_n||_{B^{s-1}_{p,r}}&\leq C\int^t_0||w_n||_{B^{s-1}_{p,r}}\dd \tau+Ct^22^{-\frac{1}{2}n}+C2^{-n(1+\ep_s)},
\end{align*}
which implies
\bal\label{Es-con-(s-1)}
||w_n||_{B^{s-1}_{p,r}}\leq Ct^2 2^{-\frac{1}{2}n}+C2^{-n(1+\ep_s)}.
\end{align}
Using Lemma \ref{le-trsport} again and combining \eqref{Es-r-(s)}, \eqref{Es-V1}, \eqref{Es-V2}, \eqref{Es-w-(s)}, \eqref{Es-con-(s-1)} we have
\bbal
||w_n||_{B^{s}_{p,r}}&\leq C\int^t_0||w_n||_{B^{s}_{p,r}}\dd \tau+\int^t_02^{\frac{1}{2}n}||w_n||_{B^{s-1}_{p,r}}\dd \tau+Ct^2+C2^{-n\ep_s}
\\&\leq C\int^t_0||w_n||_{B^{s}_{p,r}}\dd \tau+Ct^2+C2^{-n\ep_s},
\end{align*}
which implies
\bal\label{Es-con-(s)}
||w_n||_{B^{s}_{p,r}}\leq Ct^2+C2^{-n\ep_s}.
\end{align}
This completes the proof of this lemma.
\end{proof}

\textbf{Proof of the main theorem.}
Now, we need prove the result of Theorem \ref{th2}. It is easy to show that
\bal\label{con-small}
||u^n_0-v^n_0||_{B^s_{p,r}}\leq ||g_n||_{B^s_{p,r}}\leq C2^{-\frac12n},
\end{align}
which tend to 0 for $n$ tends to infinity. According Lemmas \ref{le-u}-\ref{le-v} , we have
\bal\label{Es-cha}
||u^n-v^n||_{B^s_{p,r}}\geq c||t(v^n_0)^2\pa_x v^n_0||_{B^s_{p,r}}-Ct^2-C2^{-n\ep_s}.
\end{align}
Notice that
\bbal
(v^n_0)^2\pa_x v^n_0&=(v^n_0)^2\pa_x g_n+(f_n)^2\pa_x f_n+2g_nf_n\pa_x f_n+(g_n)^2\pa_x f_n.
\end{align*}
By Lemma \ref{le-paraduct}, we have
\bbal
||(v^n_0)^2\pa_x g_n||_{B^s_{p,r}}&\leq C||v^n_0||_{L^\infty}||v^n_0||_{B^{s}_{p,r}}||g_n||_{B^{s+1}_{p,r}}\leq C2^{-n\ep_s},
\end{align*}
\bbal
||(f_n)^2\pa_x f_n||_{B^s_{p,r}}&\leq C||f_n||^2_{L^\infty}||f_n||_{B^{s+1}_{p,r}}+C||f_n||_{L^\infty}||\pa_xf_n||_{L^\infty}||f_n||_{B^{s}_{p,r}}
\\&\leq C2^{-n\ep_s},
\end{align*}
and
\bbal
||g_nf_n\pa_x f_n||_{B^s_{p,r}}&\leq C||f_n||_{L^\infty}||g_n||_{L^\infty}||f_n||_{B^{s+1}_{p,r}}+C||f_n||_{B^s_{p,r}}||\pa_xf_n||_{L^\infty}||g_n||_{B^{s}_{p,r}}
\\&\leq C2^{-n\ep_s}.
\end{align*}
This alongs with \eqref{Es-cha} implies
\bal\label{con-large}
||u^n-v^n||_{B^s_{p,r}}\geq ct||(g_n)^2\pa_x f_n||_{B^s_{p,r}}-Ct^2-C2^{-n\ep_s}.
\end{align}
Using the facts $\De_j((g_n)^2\pa_x f_n)=0,j\neq n$ and $\De_n((g_n)^2\pa_x f_n)=(g_n)^2\pa_x f_n$, direct calculation shows that
\bal\label{con-large2}
||(g_n)^2\pa_x f_n&||_{B^s_{p,r}}=2^{ns}||(g_n)^2\pa_x f_n||_{L^p}\nonumber
\\&=\frac{17}{12}||\phi^3(x)\cos(\frac{17}{12}2^nx)||_{L^p}\rightarrow \frac{17}{12} \Big(\frac{\int^\pi_0|\cos x|^p\dd x}{\pi}\Big)^{\frac1p}||\phi^3(x)||_{L^p},
\end{align}
by Riemann-Lebesgue's Lemma. Hence, combining \eqref{con-small}, \eqref{con-large}, \eqref{con-large2} and choosing the positive $T_0$ small enough, we can obtain our result.

\vspace*{1em}
\noindent\textbf{Acknowledgements.}  J. Li is supported by the National Natural Science Foundation of China (Grant No.11801090).  W. Zhu is partially supported by the National Natural Science Foundation of China (Grant No.11901092) and Natural Science Foundation of Guangdong Province (No.2017A030310634).
%\vspace*{1em}

\end{document}